\documentclass[a4paper,12pt]{article}

\usepackage[english]{babel}
\usepackage{amssymb,amsfonts,amsmath,amsthm,mathtext}
\usepackage{cite,enumerate,float,indentfirst}
\usepackage[pdftex]{hyperref}
\usepackage{graphics}

\usepackage[centerlast,small]{caption2}
\title{Viro-Zvonilov inequalities for flexible curves on an almost complex four-dimensional manifold}
\author{Zvonilov V.I.\thanks{The author's work was done on a subject of the State assignment (N 0729-2020-0055).}}
\begin{document}
	
%
\newtheorem{Theorem}{Theorem}
\newtheorem{Proposition}{Proposition}
\newtheorem{Lemma}{Lemma}
\newtheorem{Corollary}{Corollary}
\newtheorem{Remark}{Remark}
\newtheorem{Definition}{Definition}
\newtheorem{Example}{Example}
	\maketitle	
	
	\begin{abstract} 
		The restrictions on the topology of nonsingular plane projective real algebraic curves of odd degree, obtained by O. Viro and the author in the paper published in the early 90s, are extended to flexible curves lying on an almost complex four-dimensional manifold. Some examples of real algebraic surfaces and real curves on them prove the sharpness of the obtained inequalities.
		In addition, it is proved that a compact Lie group smooth action can be lifted to a cyclic branched covering space $ \tilde{X} $ over a closed four-dimensional manifold, and a  sufficient condition for $ H_1(\tilde{X})=0 $ was found.
	\end{abstract}
	
%

\newcommand{\R}{\mathbb{R}}
\newcommand{\Cb}{\mathbb{C}}
\newcommand{\Z}{\mathbb{Z}}
\newcommand{\N}{\mathds{N}}
\newcommand{\ti}{\tilde}

 \section{Introduction}
It is well known that the first part of Hilbert's 16th problem is devoted to the  study of the topology of real algebraic varieties, including restrictions on the topology and constructions of the varieties.
This paper is about restrictions on the topology of real algebraic curves on a surface.
The obtained result extends  inequalities for a real nonsingular plane projective curve of odd degree, see \cite{VZ}, to the case of a flexible curve lying on an almost complex four-dimensional manifold. The inequality for the number of nonempty ovals of a plane curve of odd degree were formulated (without a proof) in \cite{R78}, inequality (11), and in \cite{V}, Theorems 3.10 and 3.11. The proof was published in 1993 in \cite{VZ}.

The problem of extending our old inequalities arose from M. Manzaroli's paper \cite{M} where the author used B\'{e}zout-type restrictions on the topology of real algebraic curves on del Pezzo surfaces.
The importance of the extended inequalities is that they give the restrictions not only for algebraic  but for so called \emph{flexible} curves introduced by O.Ya. Viro in \cite{V}. There is no B\'{e}zout-type restriction for flexible curves.

The scheme of the proof of the extended inequalities is the same as that of \cite[\S 1]{VZ} and use a cyclic branched covering space $ \tilde{X} $ over a closed four-dimensional manifold, the proof being essentially based on the results of \cite[\S 2]{VZ}. In Section \ref{def} the notion of flexible curve on an almost complex four-dimensional manifold with an anti-holomorphic involution is introduced. The extended inequalities are formulated in Section \ref{main}. The results of Sections \ref{bc} and \ref{lift} are of independent interest: in Subsection \ref{b1} a  sufficient condition for $ H_1(\tilde{X})=0 $ is found, in Section \ref{lift}  it is proved that a compact Lie group smooth action can be lifted to $ \tilde{X} $. Subsection \ref{acs} shows how to lift an almost complex structure to $ \tilde{X} $. The proofs of the main results are in Section \ref{p}. In Section \ref{Ap} some examples of real algebraic surfaces and real curves on them prove the sharpness of the obtained inequalities.
\section{Definitions}\label{def}
\subsection{Flexible curves}
Let $X$ be a smooth oriented connected closed four-dimensional  manifold with $H_1(X)=0$, and $ A $ be its oriented connected closed two-dimensional submanifold realizing a nonzero class  $\xi\in H_2(X) $. 
Assume that there exists a smooth involution $c:X\rightarrow X $ with non-empty fixed-point set $X^c $, and that $ A $ is invariant under $c$. Let $ A^c=A\cap X^c $.  
  By \cite[Ch. VI, Corollary 2.5]{B}) the set $ X^c $ is a smooth submanifold. Let $B$ be the union of the components of $ X^c $ that intersect $A$. Suppose that $ X $ is almost complex along $ B $, i.e. 
there is a smooth linear operator field $ J_x:T_xX\rightarrow T_xX $ for $ x\in B $  such that $ J^2=-\mathrm{id}$, and the involution $ c $ is anti-holomorphic with respect the almost complex structure: $ T_xX=T_xB\oplus JT_xB, d_xc(v_1+v_2)=v_1-v_2 $. Then $ \dim B=2 $.

 Following Viro \cite[\S 1]{V}, we call $ A $ a \emph{flexible curve}  of genus $ g $ if 
\renewcommand{\labelenumi}{(\theenumi)}
\renewcommand{\theenumi}{\roman{enumi}}
\begin{enumerate}
		\item  the genus of the surface $ A $ is equal to $ g $;
	\item the field of its tangent planes on $ A^c=A\cap B $  
	can be deformed, in the class of planes tangent to $ X $ and invariant under  $c$, to the field of complex lines tangent to $ A^c $. 
\end{enumerate}

Clearly, a real algebraic curve on a real algebraic surface is flexible.
    
A flexible curve
$A$ is of \emph{type}
	I or II if  $A\setminus A^c $ is disconnected or, correspondingly, connected. In the first case $ A^c $ divides $ A $  into two connected parts $ A^+, A^- $. An orientation of $ A $ induces orientations of $ A^+, A^- $ that, in turn, define  two opposite \emph{complex orientations} on $ A^c=\partial A^+=\partial A^- $.

\subsection{Membranes}
Let  $ B_1,\ldots,B_{k} $ be the 
orientable components of  $ B\setminus A^c $ with some orientations, 
 $ b_i\in H_2(B,A^c;\mathbb{Z}_p) $ be the class determined by $ B_i $, and
 $ k^+,k^0,k^- $ be the numbers of $ B_i $ with positive, zero, and negative Euler characteristics, which are called, correspondingly, \emph{elliptic, parabolic, and hyperbolic} components.
 
 Let $ Y_1,\ldots,Y_{r} $ be the 
 orientable connected components of $ B $. By $y_i\in H_2(B;\mathbb{Z}_p) $ denote the class determined by the component $ Y_i $.
\section{Main results}\label{main}
Let $ m>1 $ be the maximal odd number, such that 
$ m\,|\,\xi $, 
$ h=p^{\alpha}\,|\,m $ be the maximal power of a prime number  $ p $ that is a divisor of $ m $, and  $ \xi^2 $ be the self-intersection number of $ \xi\in H_2(X)$.

\begin{Theorem} [A bound on the number of hyperbolic components]\label{1}
	\begin{equation}\label{i1}
		k^-\leq \dfrac{b_2(X)+\sigma(X)}{2}+g-\dfrac{\xi^2(m^2-1)}{4m^2}, 
	\end{equation}
where $ b_2(X) $ and $ \sigma(X) $ are the second Betti number and the signature of $ X $. 
\end{Theorem}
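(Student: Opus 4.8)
The plan is to mimic the proof scheme of [VZ, §1], transported to the almost-complex four-manifold setting, using the cyclic branched covering $\tilde X$ whose construction and key homological properties (in particular the sufficient condition for $H_1(\tilde X)=0$ and the lift of the almost complex structure) are supplied by Sections \ref{bc}, \ref{lift}. The strategy is to realize the inequality \eqref{i1} as an instance of a general Smith-theoretic / signature estimate on $\tilde X$, where the left-hand side counts hyperbolic membrane components and the right-hand side packages the Betti number, the signature, the genus, and the self-intersection defect $\tfrac{\xi^2(m^2-1)}{4m^2}$.

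First I would set $h=p^\alpha\mid m$ and form the $h$-fold cyclic branched covering $\tilde X \to X$ ramified along $A$ (possible since $h\mid\xi$ and $m$ is odd), invoking Subsection \ref{b1} to guarantee $H_1(\tilde X)=0$ and Subsection \ref{acs} to equip $\tilde X$ with a lifted almost complex structure; the lifted covering involution $\tilde c$ (from Section \ref{lift}) then plays the role of a $\Z_p$-action with fixed-point set lying over $B$. Next I would compute the relevant numerical invariants of $\tilde X$: by the $G$-signature theorem (or the branched-covering signature formula of the type used in \cite{VZ}), $\sigma(\tilde X)$ is expressed in terms of $\sigma(X)$ and a defect term proportional to $\xi^2$ with the factor $\tfrac{m^2-1}{m^2}$, which is exactly the source of the last summand in \eqref{i1}; similarly $b_2(\tilde X)$ and the Euler characteristic relate to $b_2(X)$ and the genus $g$ of $A$ through the branching along a surface of that genus.

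The decisive step is to bound $k^-$, the number of hyperbolic components of $B\setminus A^c$, by a Smith-inequality argument applied to the real locus $X^c$ and its lift in $\tilde X$. The idea is that each hyperbolic component (negative Euler characteristic) contributes to the $\Z_p$-homology of the fixed-point data in a way controlled by $\dim_{\Z_p} H_*(\tilde X;\Z_p)$, and that the quantity $\tfrac{b_2(X)+\sigma(X)}{2}$ is precisely the dimension of a maximal subspace on which the intersection form behaves positively, so that the Euler-characteristic contributions of the elliptic, parabolic, and hyperbolic pieces can be balanced against the genus correction $+g$ and the signature defect $-\tfrac{\xi^2(m^2-1)}{4m^2}$. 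Concretely, I would combine the additivity of Euler characteristics over the components $B_i$ with the inequality coming from the nonnegativity of certain $\tilde c$-invariant subspaces of $H_2(\tilde X;\Z_p)$.

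The main obstacle I anticipate is the last step: making rigorous the passage from the topology of the branched cover to the combinatorial count $k^-$ of hyperbolic components. In \cite{VZ} this rests on a careful analysis of the equivariant homology and the intersection form restricted to the fixed locus, and here one must check that the almost-complex-along-$B$ hypothesis (condition (ii) of the flexible-curve definition, deforming tangent planes to complex lines) supplies exactly the local model needed to identify the normal data along $A^c$ and to control the signs. In particular, one has to verify that the branched covering and its signature defect are insensitive to replacing the algebraic structure by the flexible one — i.e. that only the homology class $\xi$, the genus $g$, and the local almost-complex tangency enter — so that the proof of \cite[\S 2]{VZ} goes through verbatim with $X$ in place of $\Cb P^2$ and $\xi$ in place of the degree class. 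I would therefore devote the bulk of the argument to reconciling the Euler-characteristic bookkeeping of the membrane components with the Smith-theoretic dimension count, expecting that the genus term $+g$ arises from the contribution of $A$ to $b_2(\tilde X)$ and that the remaining terms assemble exactly as in the classical plane-curve case.
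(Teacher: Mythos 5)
Your proposal stays at the level of a plan and, where it does commit to specifics, it diverges from what is actually needed. Three concrete problems. First, you take the covering of degree $h=p^{\alpha}$; the paper's proof of Theorem \ref{1} takes $q=m$, the full maximal odd divisor. With $q=h$ the signature-defect term you can extract is $\xi^2(h^2-1)/(4h^2)$, which only yields the weaker inequality with $h$ in place of $m$; the prime-power covering is reserved for Theorem \ref{2}, where mod-$p$ Smith theory genuinely enters. Second, you invoke Subsection \ref{b1} to get $H_1(\tilde X)=0$, but that requires $\pi_1(X\setminus A)$ abelian --- a hypothesis Theorem \ref{1} does not carry and its proof does not use. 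Third, and most importantly, the ``decisive step'' is missing its mechanism. The argument is not a Smith inequality on the real locus and not about $\tilde c$-invariant subspaces of $H_2(\tilde X;\Z_p)$. It runs through the deck transformation $\tau$ of the $m$-fold cover: one works in the eigenspace $M=\ker(\tau_*-\xi^{-1}\mathrm{id})\subset H_2(\tilde X;\Cb)$ (here $\xi$ is a root of unity), for which Rokhlin's computations give $\dim M=b_2(X)+2g$ and $\mathrm{sign}\,Q=\sigma(X)-\xi^2(m^2-1)/(2m^2)$. Each membrane $B_i$ produces a class $\beta_i=\sum_j\xi^j\gamma_{ij}\in M$ built from the surfaces $\mathrm{Cl}(\tilde B_i\cup\tau^j\tilde B_i)$; these are pairwise orthogonal with $\beta_i^2=-\chi(B_i)\,q$, the self-intersection being computed by Arnold's tangent-vector-field scheme --- this is precisely where the almost complex structure along $B$ is used, a point your proposal flags as an ``obstacle'' but does not resolve. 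The hyperbolic components then span a positive subspace of $(M,Q)$ of dimension $k^-$, whence $k^-\le(\dim M+\mathrm{sign}\,Q)/2$, which is the inequality.

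Your intuition that the genus term comes from the contribution of $A$ to the second homology of the cover and that the last summand is a $G$-signature defect is correct, but without the eigenspace $M$, the membrane classes $\beta_i$, and the formula $\beta_i^2=-\chi(B_i)q$, there is no bridge from the topology of $\tilde X$ to the count $k^-$, so the proof as proposed does not go through.
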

Put $\rho=\dim\ker(\mathrm{in}_*:H_2(B;\mathbb{Z}_p)\rightarrow H_2(X;\mathbb{Z}_p))$, 	
and $ \delta=1$ or $ 0 $ if $ A $ is of type I or II.
\begin{Theorem} [A bound on the number of non-elliptic components]\label{2} If $ \pi_1(X\setminus A) $ is an abelian group then
	\begin{equation}\label{i2}
		k^-+k^0\leq \dfrac{b_2(X)+\sigma(X)}{2}+g-\dfrac{\xi^2(h^2-1)}{4h^2}+\rho+\delta.  
	\end{equation}
\end{Theorem}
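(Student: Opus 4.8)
The plan is to follow the proof of Theorem~\ref{1}, replacing the $m$-fold covering by the $h$-fold one and extracting a sharper count at the level of $\mathbb{Z}_p$-homology. First I would construct the $h$-fold cyclic covering $\pi\colon\ti X\to X$ branched along $A$. Since $h=p^{\alpha}\,|\,\xi$ and $H_1(X)=0$, the group $H_1(X\setminus A)$ admits $\Z_h$ as a quotient (meridian $\mapsto 1$), and because $\pi_1(X\setminus A)$ is abelian this surjection defines a connected regular covering with deck group $\Z_h=\langle\tau\rangle$. The abelianity of $\pi_1(X\setminus A)$ is the sufficient condition of Subsection~\ref{b1}, so $H_1(\ti X)=0$. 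I then lift $c$ to an involution $\ti c$ (Section~\ref{lift}) and the almost complex structure (Subsection~\ref{acs}), obtaining $\ti c\,\tau\,\ti c^{-1}=\tau^{-1}$; note that $h$ is odd, hence $\ti c$ is orientation preserving and $p$ is odd.

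Next I would compute the spectral data of the $\tau$-action on $H_2(\ti X;\Cb)$. Writing $V_j$ for the $e^{2\pi ij/h}$-eigenspace, one has $V_0=\pi^*H_2(X;\Cb)$, and a standard eigenspace Euler characteristic computation gives $\dim_{\Cb}V_j=b_2(X)+2g$ for $j\neq 0$, the branch curve $\ti A\cong A$ having genus $g$. The Atiyah--Singer $G$-signature theorem applied to $\tau$ (a rotation of angle $2\pi/h$ in the normal bundle of $\ti A$, with $[\ti A]^2=\xi^2/h$) yields the eigenspace signatures $\sigma_j=\sigma(X)-\tfrac{2j(h-j)}{h^2}\,\xi^2$; summing over $j$ recovers the known value $\sigma(\ti X)=h\,\sigma(X)-\tfrac{h^2-1}{3h}\,\xi^2$, which is a useful check. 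For the middle value $j_0=(h-1)/2$ this gives, for the positive inertia index of the Hermitian intersection form on $V_{j_0}$,
\begin{equation*}
b^+(V_{j_0})=\frac{\dim_{\Cb}V_{j_0}+\sigma_{j_0}}{2}=\frac{b_2(X)+\sigma(X)}{2}+g-\frac{\xi^2(h^2-1)}{4h^2},
\end{equation*}
which is exactly the main term on the right-hand side of \eqref{i2}.

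The core step, carried out as in \cite[\S 2]{VZ}, is to realise the non-elliptic membranes inside this eigenspace. Each component $B_i$ of $B\setminus A^c$ has its boundary on the branch locus, so over $\ti A$ its $h$ lifts close up into a surface carrying a class in $H_2(\ti X;\mathbb{Z}_p)$; since $\ti c$ fixes $B$ while $\tau$ permutes the sheets, the relevant invariant is the component of this class in the $\ti c$-fixed diagonal of $V_{j_0}\oplus V_{h-j_0}$, on which the induced real form has positive index $b^+(V_{j_0})$. Elliptic membranes (disks) contribute trivially, whereas the $k^-+k^0$ non-elliptic ones produce classes that, after discarding the $\rho$-dimensional space of relations coming from $\ker(\mathrm{in}_*\colon H_2(B;\mathbb{Z}_p)\to H_2(X;\mathbb{Z}_p))$ and the $\delta$ classes distinguishing the two complex orientations in the type~I case, are linearly independent and span a subspace on which the form is positive definite. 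Comparison with the computation above then gives $k^-+k^0-\rho-\delta\le b^+(V_{j_0})$, which is \eqref{i2}.

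The hard part will be this last step: pinning down the middle-eigenspace component of each membrane class, proving positive definiteness of the intersection form on their span, and carrying out the exact bookkeeping of the corrections $\rho$ and $\delta$. This is precisely where the hypothesis $H_1(\ti X)=0$ (hence the abelianity of $\pi_1(X\setminus A)$) is indispensable, since it makes the eigenspace dimension count and the Poincar\'e--Lefschetz duality detecting the parabolic components exact; it is also the only place forcing the passage from the odd number $m$ of Theorem~\ref{1} to the prime power $h$, because the independence argument for the additional parabolic classes rests on $\mathbb{Z}_p$-coefficients and Smith theory for the $\Z_p\subset\Z_h$ action.
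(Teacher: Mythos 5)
Your overall strategy coincides with the paper's: pass to the $h$-sheeted cyclic branched covering, use the abelianity of $\pi_1(X\setminus A)$ to get $H_1(\ti X)=0$, and compare the membrane classes with the middle eigenspace $M$ of $\tau_*$ on $H_2(\ti X;\Cb)$, whose dimension $b_2(X)+2g$ and signature $\sigma(X)-\xi^2(h^2-1)/(2h^2)$ give exactly the main term of (\ref{i2}). But the step you yourself flag as ``the hard part'' is precisely the substance of the paper's proof, and as stated your version of it contains a real error and a real gap. The error: the span of the non-elliptic membrane classes is \emph{not} positive definite. The paper's classes $\beta_i=\sum_{j=1}^{h-1}\xi^j\gamma_{ij}\in M$ are pairwise orthogonal with $\beta_i^2=-\chi(B_i)h$, so the parabolic components give isotropic vectors and the form on the span of the $k^-+k^0$ non-elliptic classes is only nonnegative. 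One must therefore invoke the nondegeneracy of $Q$ on $M$ (Rokhlin) to bound the dimension $l$ of a nonnegative subspace by $(\dim M+\mathrm{sign}\,Q)/2$; a positive-definiteness argument cannot accommodate the parabolic components, which are the whole point of Theorem \ref{2} as opposed to Theorem \ref{1}.

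The gap: the inequality $k^-+k^0\le l+\rho+\delta$ is not a matter of ``discarding'' relations; it requires showing that the number of independent linear relations among $\beta_1,\dots,\beta_k$ is at most $\rho+\delta$. The paper does this in two lemmas: (i) a primitive relation over $\Z[\xi]$ is reduced by the ring homomorphism $\Z[\xi]\to\mathbb{Z}_p$, $\xi\mapsto1$, which is nontrivial only because $h$ is a prime power --- this, rather than Smith theory as such, is what forces the passage from $m$ to $h$ --- and the Smith-sequence monomorphism $\tilde\alpha_2:H_2(X,A;\mathbb{Z}_p)\to H_2(\ti X;\mathbb{Z}_p)$ carries $in_*(b_i)$ to the reduction of $\beta_i$, giving $\mathrm{rk}(\beta_1,\dots,\beta_k)\ge\mathrm{rk}(in_*(b_1),\dots,in_*(b_k))=k-\dim\ker in_*$; (ii) the estimate $\dim\ker in_*\le\rho+\delta$ is extracted from the exact sequences of the pairs $(B,A^c)$ and $(X,A)$, where $\delta$ appears as $\dim\ker\bigl(H_1(A^c;\mathbb{Z}_p)\to H_1(A;\mathbb{Z}_p)\bigr)$ and $\rho$ as the kernel on the closed components of $B$. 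None of this bookkeeping is carried out in your proposal, and ``Poincar\'e--Lefschetz duality detecting the parabolic components'' does not substitute for it. A minor point: the paper works directly with the classes $\beta_i$ in the single eigenspace $M$, with no need for the $\ti c$-fixed diagonal of $V_{j_0}\oplus V_{h-j_0}$.
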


	\begin{Theorem} [Extremal property of inequality (\ref{2})]\label{3}
	\begin{enumerate}
		
		\item \label{ext1} if equality holds in (\ref{2}) with $ \delta=1 $ then there are $ x_1,\ldots,x_k\in\mathbb{Z}_p $ such that the boundary homomorphism  $H_2(B,A^c;\mathbb{Z}_p)\rightarrow H_1(A^c;\mathbb{Z}_p)$ takes  $ x=\sum x_jb_j $ to the fundamental class $ [A^c] $ of the curve $ A^c$ endowed with a complex orientation, and $ \chi(B_j)=0 $ for all $ j $ with $ x_j\neq0 $.
		\item\label{ext0} 
		if equality holds in (\ref{2}) with $ \delta=0 $ and $ \rho>0 $ then there are $ x_1,\ldots,x_r\in\mathbb{Z}_p $ such that $\sum x_jy_j\in\ker\mathrm{in}_*$, and $ \chi(Y_j)=0 $ for all $ j $ with $ x_j\neq0 $. 
	\end{enumerate}

\end{Theorem}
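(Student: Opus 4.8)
The plan is to return to the proof of Theorem~\ref{2} and to determine which of its intermediate inequalities become equalities, then to translate those equalities into homological statements about $B$. Recall that (\ref{i2}) is obtained, as in \cite[\S1]{VZ}, by passing to the cyclic branched cover $\tilde X\to X$ of degree $h=p^{\alpha}$ with its $\mathbb{Z}_p$-action and lifted involution $\tilde c$, and by analysing the intersection form $Q$ on the relevant $\mathbb{Z}_p$-invariant piece of $H_2(\tilde X;\mathbb{Z}_p)$; the term $\tfrac{\xi^2(h^2-1)}{4h^2}$ is the G-signature contribution of the branch locus, so that the right-hand side of (\ref{i2}), apart from the summands $\rho+\delta$, is an upper bound for the negative-plus-null index of $Q$. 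The quantity $k^-+k^0$ is produced as a lower bound for the negative-plus-null index of $Q$ restricted to the span $W$ of the (lifted) classes of the components of $B\setminus A^c$. Equality in (\ref{i2}) therefore forces $Q|_W$ to saturate this index and forces its radical to be as large as (\ref{i2}) allows.

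First I would make the inertia of $Q|_W$ explicit. The components $B_i$ (respectively $Y_i$) are pairwise disjoint in $X$, so the off-diagonal intersections vanish and $Q|_W$ is diagonal; the diagonal entry of a component equals its self-intersection, which for the totally real fixed locus of the anti-holomorphic involution is controlled by the Euler characteristic, the normal bundle along $B$ being $J\!\cdot\!TB$ so that its Euler number is determined by $\chi$. Hence hyperbolic components ($\chi<0$) feed the negative index, parabolic components ($\chi=0$) are isotropic and lie in the radical of $Q|_W$, and elliptic components ($\chi>0$) feed the positive index. The radical is enlarged beyond the parabolic generators by two explicit sources: classes in $\ker\mathrm{in}_*$, which map to zero in $H_2(X;\mathbb{Z}_p)$ and are thus $Q$-orthogonal to all of $W$, accounting for the summand $\rho$; and, when $A$ is of type~I, an a~priori allowance for one further isotropic direction, accounting for $\delta$. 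This bookkeeping recovers (\ref{i2}) and shows that at equality the radical of $Q|_W$ attains the full permitted dimension, so both the $\rho$-slot and, in the type-I case, the $\delta$-slot are filled by genuine radical classes.

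For part~\ref{ext1} ($\delta=1$) the only way the type-I allowance can be filled is by a class $x=\sum x_jb_j\in H_2(B,A^c;\mathbb{Z}_p)$ whose image under $\partial\colon H_2(B,A^c;\mathbb{Z}_p)\to H_1(A^c;\mathbb{Z}_p)$ is the complex-orientation class $[A^c]$; otherwise the $\delta$-direction would be absent and (\ref{i2}) would be strict. Since $x$ lies in the radical of $Q|_W$, and at equality that radical is spanned by isotropic generators, every $B_j$ with $x_j\neq0$ is parabolic, i.e. $\chi(B_j)=0$. For part~\ref{ext0} ($\delta=0$, $\rho>0$) the same reasoning produces a nontrivial class $\sum x_jy_j\in\ker\mathrm{in}_*$ realising the $\rho$-part of the radical, and membership in the extremal radical again forces $\chi(Y_j)=0$ whenever $x_j\neq0$.

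The main obstacle is precisely this last implication: that a generator with $\chi\neq0$ cannot occur in the extremal class. Isotropy by itself is not enough, since a combination in $\ker\mathrm{in}_*$ can vanish in $H_2(X;\mathbb{Z}_p)$ while involving non-parabolic components; what must be shown is that in the saturated configuration every hyperbolic generator already contributes to the negative index and every elliptic generator to the positive index, so that absorbing any of them into the radical would strictly drop the corresponding index and contradict equality in (\ref{i2}). Establishing this needs the self-intersection computation performed on the cover $\tilde X$ rather than on $X$, together with the adjunction-type identity relating each component's self-intersection to its Euler characteristic: once $\chi\neq0$ is seen to force a strictly nonzero, sign-definite diagonal entry, such a generator is barred from every radical class and both extremal statements follow.
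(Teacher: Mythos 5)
Your overall strategy is the paper's: at equality the signature bound on the branched cover is saturated, a linear relation among the lifted membrane classes appears, and pairing that relation against each generator --- whose self-intersection is proportional to its Euler characteristic by (\ref{4}) --- kills every coefficient on a non-parabolic component. Your closing paragraph correctly isolates the crux. But there is a genuine gap in where the computation lives. You set everything up ``on the relevant $\mathbb{Z}_p$-invariant piece of $H_2(\tilde X;\mathbb{Z}_p)$'', whereas the form $Q$ whose signature is controlled lives on the complex eigenspace $M=\ker(\tau_*-\xi^{-1}\mathrm{id})\subset H_2(\tilde{X};\mathbb{C})$ of the deck transformation (Subsection \ref{M}). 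A purely mod-$p$ version of your final step yields only $\chi(B_j)\equiv 0\ \mathrm{mod}\ p$ --- exactly what the proof of Remark \ref{rho} gets --- not the integer equality $\chi(B_j)=0$ claimed in Theorem \ref{3}. The paper instead produces, from the rank drop forced by equality (Lemmas \ref{in} and \ref{ker}), a nontrivial relation $\sum\lambda_j\beta_j=0$ with coefficients in $\mathbb{Z}[\xi]\subset\mathbb{C}$; pairing \emph{that} with $\beta_i$ gives $\lambda_i\chi(B_i)q=0$ over $\mathbb{C}$, and the $x_j$ of the statement are recovered as $\mu(\lambda_j)$ under the ring homomorphism $\mathbb{Z}[\xi]\rightarrow\mathbb{Z}_p$, $\xi\mapsto1$ (nontrivial precisely because $q=h$ is a prime power). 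Your proposal never passes between these two coefficient systems, and without that passage the conclusion cannot be reached.

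The second gap is that the identification of the extremal direction with the specific classes named in the theorem is asserted rather than proved. You claim the $\delta$-slot ``can only be filled'' by a class $x$ with $\partial^c x=[A^c]$ and that this $x$ ``lies in the radical of $Q|_W$''; neither is immediate, since a priori the radical could be realized by relations unrelated to $[A^c]$ or to $\ker\mathrm{in}_*$. What the paper actually establishes (Corollary \ref{I} and Lemma \ref{B}) is a dichotomy: a nontrivial $\mathbb{Z}[\xi]$-relation among the $\beta_j$ either reduces to a nontrivial relation among the boundary classes $d_j$ --- which forces type I and forces the reduced coefficients to be proportional to the $x_j$ with $\partial^c(\sum x_jb_j)=[A^c]$, because $\ker in_1$ is generated by $[A^c]$ --- or rewrites as a relation (\ref{l2}) among the $\zeta_j$ whose reduction gives a nontrivial element $\sum x_jy_j\in\ker\mathrm{in}_*$. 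Without this lemma the coefficients of your radical vector have no connection to the coefficients of the homological class in the statement. (A minor point besides: with the paper's convention $\beta_i^2=-\chi(B_i)q$, hyperbolic components have \emph{positive} square, so your inertia bookkeeping has the signs reversed; this is cosmetic but should be fixed.)
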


According to \cite{N} for a nonsingular ample algebraic curve $ A $ on a simply connected projective surface $ X $ the group $ \pi_1(X\setminus A) $ is  abelian.
\begin{Remark}\label{rho}
	$ \rho $ is at most the number of $ Y_1,\ldots,Y_{r} $ with $ \chi(Y_j)\equiv0\, \mathrm{mod}\, p$.
\end{Remark} 
 \section{A branched covering}\label{bc} 

In this section we consider a more general case of a smooth oriented connected closed four-dimensional  manifold $ X $ with $H_1(X)=0$ and without involution.
Let  $ A $ be its oriented connected closed two-dimensional submanifold realizing a nonzero class  $\xi\in H_2(X) $.
By $ n $ denote the largest integer dividing $ \xi $, by $ q>1$ a divisor of $ n $, and by $ \mu$ the fraction $ n/q $.  According to Rokhlin \cite[\S 2]{R71}, there is a $ q $-sheeted cyclic covering $\nu: \tilde{X}\rightarrow X  $ branched over $ A $. The restriction $ \nu|_{\nu^{-1}(A)} $ is a diffeomorphism that identifies  $ A $ and $ \nu^{-1}(A) $.
\subsection{First homology of a cyclic branched covering space}\label{b1}
\begin{Theorem}\label{ab}
	 If $ \pi_1(X\setminus A) $ is an abelian group then $ H_1(\tilde{X})=0 $. In this case $ \pi_1(X\setminus A)=\Z_q$.
\end{Theorem}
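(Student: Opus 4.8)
The plan is to reduce the statement to a homology computation in the open complement $U=X\setminus A$ and in its preimage $\tilde U=\tilde X\setminus\nu^{-1}(A)$, following a meridian of $A$ through the branched covering. First I would compute $H_1(U)$ from the long exact sequence of the pair $(X,U)$. A tubular neighborhood of $A$ together with excision identifies $(X,U)$ locally with the Thom space of the oriented normal bundle of $A$, so the Thom isomorphism yields $H_k(X,U)\cong H_{k-2}(A)$; in particular $H_2(X,U)\cong H_0(A)=\Z$ and $H_1(X,U)\cong H_{-1}(A)=0$. Since $H_1(X)=0$, the sequence becomes $H_2(X)\xrightarrow{\ \cdot\,\xi\ }\Z\to H_1(U)\to 0$, the first map being $\eta\mapsto\eta\cdot\xi$. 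Writing $\xi=n\zeta$ with $\zeta$ primitive (possible because $n$ is the largest integer dividing $\xi$), the unimodularity modulo torsion of the intersection form of the closed oriented $4$-manifold $X$ provides some $\eta$ with $\eta\cdot\zeta=1$, so $\gcd_{\eta}(\eta\cdot\xi)=n$ and the image of $\cdot\,\xi$ is $n\Z$. Hence $H_1(U)\cong\Z_n$, generated by the meridian $m$ of $A$, and under the hypothesis that $\pi_1(U)$ is abelian it equals its abelianization, giving $\pi_1(X\setminus A)\cong\Z_n=\langle m\rangle$.

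Next I would describe the unbranched part of the covering. The restriction $\tilde U\to U$ is the connected $q$-sheeted cyclic covering classified by the surjection $\phi\colon\pi_1(U)=\Z_n\to\Z_q$ sending the meridian $m$ to a generator (full ramification along $A$, as reflected by $\nu|_{\nu^{-1}(A)}$ being a diffeomorphism); connectedness of $\tilde U$ is exactly surjectivity of $\phi$. Consequently $\pi_1(\tilde U)=\ker\phi=\langle m^q\rangle\cong\Z_\mu$ and $H_1(\tilde U)\cong\Z_\mu$. The crucial geometric point is to identify this generator explicitly: in the local model $z\mapsto z^q$ on a normal disk to $A$ the loop $m^q$ lifts to a single meridian $\tilde m$ of $\nu^{-1}(A)$, whereas $m$ itself does not lift to a loop, so $\tilde m$ represents a generator of $H_1(\tilde U)\cong\Z_\mu$. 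Verifying this lift — the one place where the branching, rather than an ordinary covering, enters — is the step I expect to require the most care.

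Finally I would insert $\tilde m$ into the long exact sequence of the pair $(\tilde X,\tilde U)$. Because $\nu$ restricts to a diffeomorphism $\nu^{-1}(A)\cong A$, the branch surface $\tilde A=\nu^{-1}(A)$ is connected and oriented, so the same Thom-isomorphism computation gives $H_2(\tilde X,\tilde U)\cong\Z$ and $H_1(\tilde X,\tilde U)\cong 0$. The generator of $H_2(\tilde X,\tilde U)$ is a meridian disk of $\tilde A$, and its image under the connecting map $\partial$ is precisely $\tilde m$, a generator of $H_1(\tilde U)$. Hence $\partial$ is surjective, and the tail of the sequence
\[
H_2(\tilde X,\tilde U)\xrightarrow{\ \partial\ }H_1(\tilde U)\to H_1(\tilde X)\to H_1(\tilde X,\tilde U)=0
\]
forces $H_1(\tilde X)=0$. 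Geometrically, capping the generating meridian $\tilde m$ of $H_1(\tilde U)$ by the normal disk of $\tilde A$ inside $\tilde X$ kills the whole first homology.
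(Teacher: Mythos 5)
Your proposal is correct and follows essentially the same route as the paper's: both arguments reduce the theorem to showing that the lifted meridian generates $H_1(\tilde{X}\setminus A)\cong\mathbb{Z}_{\mu}$, using that $H_1(X\setminus A)\cong\mathbb{Z}_n$ is generated by the meridian of $A$, that the covering corresponds to the index-$q$ subgroup $q\mathbb{Z}_n\cong\mathbb{Z}_{\mu}$, and that the meridian upstairs maps to $q$ times the meridian downstairs --- the paper quotes these facts from Rokhlin and finishes with the additive sequences of the triads $X=T\cup U$, $\tilde{X}=\tilde{T}\cup\tilde{U}$, whereas you derive them directly from the Thom isomorphism and the unimodularity of the intersection form and finish with the exact sequence of the pair $(\tilde{X},\tilde{X}\setminus A)$; these two packagings are equivalent via excision. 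The only point of divergence is that you obtain $\pi_1(X\setminus A)\cong\mathbb{Z}_n$ (with $n$ the divisibility of $\xi$), which is also what the paper's own proof yields (it explicitly writes $H_1(X\setminus A)\cong\mathbb{Z}_n$), so the ``$=\mathbb{Z}_q$'' in the statement appears to be a misprint for $\mathbb{Z}_n$ rather than a gap in your argument.
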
  
\begin{proof}
We use Rokhlin's construction of the covering $ \nu $ (see \cite[\S 2]{R71}).

Put  
$ \Pi=\pi_1(X\setminus A)=H_1(X\setminus A) $.  By $ _{\mu}\Pi $ denote the set of $ a\in\Pi $ such that $ a^{\mu}=1 $.  
The restriction $\nu: \tilde{X}\setminus A\rightarrow X\setminus A$ is an unbranched covering defined by a subgroup $_{\mu}\Pi= \nu_*\pi_1(\tilde{X}\setminus A)
\subset \pi_1(X\setminus A)$.

Let $ T $, $ \tilde{T} $ be  closed tubular neighborhoods of $ A $ in $ X $, $ \tilde{X} $, and $ U $, $ \tilde{U} $ their closed complements. The projections $pr:\partial T\rightarrow A $ and $\widetilde{pr}: \partial\tilde{T}\rightarrow A $ are $ SO(2) $-bundles with  circles $ C $ and $\tilde{C}$ as the fibers. Therefore, $ H_1(T,\partial T)=H_1(\tilde{T},\partial \tilde{T})=0 $ (see, e.g., \cite[Theorem 5.7.10]{S}), and so the inclusion homomorphisms $ H_1(\partial T)\rightarrow H_1(T) $, $ H_1(\partial \tilde{T})\rightarrow H_1(\tilde{T}) $ are epimorphisms. By virtue of the exactness of the additive sequences of triads $  X, 
T,  U  $ and $ \tilde{X}, \tilde{T}, \tilde{U} $ the equalities $ H_1(X)=0 $, $ H_1(\tilde{X})=0 $ holds iff the inclusion homomorphisms $ \beta:H_1(\partial T)=H_1(\partial U)\rightarrow H_1(U) $, $\tilde{\beta}: H_1(\partial \tilde{T})=H_1(\partial \tilde{U})\rightarrow H_1(\tilde{U}) $ are epimorphisms.
According to \cite[n. 2.2, 2.3]{R71}, the embedding homomorphism $\gamma: H_1(C)\rightarrow H_1(U)\cong H_1(X\setminus A)\cong \Z_n$ is an epimorphism, $ \nu_*:H_1(\tilde{C})\rightarrow H_1(C)$ is the miultiplication by $ q $, and $ \nu_*:H_1(\tilde{U})\cong_{\mu}\Pi\cong \Z_{\mu}\rightarrow H_1(U)\cong \Z_n$ is a monomorphism. Then $\tilde{\gamma}: H_1(\tilde{C})\rightarrow H_1(\tilde{U})$, equaled to $\nu_*^{-1}\circ\gamma\circ\nu_*$, is an epimorphism,  
and we have $ H_1(\tilde{X})=0 $.  	

\end{proof}

\section{Lifting a group action to a cyclic branched covering space}\label{lift}
 
\begin{Theorem}\label{G}
Let $ G $ be a compact Lie group (not necessarily connected) with a smooth action on $ X $. Suppose that there is a fixed point $ x_0 $ of this action. Choose a point $ \tilde{x}_0\in\tilde{X} $ with $ \nu(\tilde{x}_0)=x_0$. Then there exists a (unique) covering $ G $-action on $ \tilde{X}$ with a fix point $ \tilde{x}_0 $.  
\end{Theorem}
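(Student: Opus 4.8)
The plan is to treat $\nu$ as an honest covering away from the branch locus, lift the action there by a characteristic‑subgroup argument, and then extend the lift across the ramification locus using the local model of the branched covering. Write $X^\circ=X\setminus A$ and $\tilde X^\circ=\nu^{-1}(X^\circ)$, so that $\nu^\circ:=\nu|_{\tilde X^\circ}\colon\tilde X^\circ\to X^\circ$ is a connected regular $q$-sheeted cyclic covering with deck group $\Z_q$. Since $\nu$ sends the ramification locus $\nu^{-1}(A)$ onto $A$ and any covering action must preserve the ramification locus (a topologically distinguished set), $A$ is $G$-invariant and $G$ acts on $X^\circ$; in the intended applications $G$ is generated by deck transformations and the involution $c$, all of which preserve $A$. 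I would first lift this action to $\tilde X^\circ$ and then extend it over $\nu^{-1}(A)$.

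The conceptual heart is the following algebraic observation. Identify $K:=\nu^\circ_*\pi_1(\tilde X^\circ)$ with the kernel of the canonical surjection $\pi_1(X^\circ)\to H_1(X^\circ)\cong\Z_n\to\Z_q$, the last map being reduction (recall $H_1(X\setminus A)\cong\Z_n$ was already used in the proof of Theorem \ref{ab}). Because every subgroup of a finite cyclic group is characteristic, the order-$\mu$ subgroup $q\Z_n\subset\Z_n$ is characteristic, and hence its preimage $K$ is a characteristic subgroup of $\pi_1(X^\circ)$. Consequently $K$ is preserved by the automorphism of $\pi_1(X^\circ)$ induced by each $g\in G$, which is exactly what the covering‑space lifting criterion requires. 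The point worth stressing is that this works even though $G$ is \emph{disconnected}, so that the induced automorphisms of $\pi_1$ may be nontrivial; without characteristicity a disconnected $G$ need not lift.

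To assemble a genuine group action fixing $\tilde x_0$ I would use the fixed point. If $x_0\notin A$, take $x_0$ itself as basepoint, with $\tilde x_0\in\tilde X^\circ$; since $g$ fixes $x_0$ it induces a genuine automorphism of $\pi_1(X^\circ,x_0)$ with $g_*K=K$, so each $g$ has a unique lift $\tilde g$ satisfying $\tilde g(\tilde x_0)=\tilde x_0$. Uniqueness of pointed lifts forces $\tilde e=\mathrm{id}$ and $\widetilde{g_1g_2}=\tilde g_1\tilde g_2$, so $g\mapsto\tilde g$ is a homomorphism; smoothness of $G\times\tilde X^\circ\to\tilde X^\circ$ follows by locally lifting the smooth action map. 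If $x_0\in A$, then since $\nu$ restricts to a diffeomorphism on $\nu^{-1}(A)$ the point $\tilde x_0$ is the \emph{only} preimage of $x_0$; here I would average a Riemannian metric to obtain a $G$-invariant ball about $x_0$ and use the local branched normal form of $\nu$ to single out the preferred lift fixing $\tilde x_0$.

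Finally I would extend each $\tilde g$ across $\nu^{-1}(A)$. In the local normal form $(z,w)\mapsto(z^{q},w)$ with $A=\{z=0\}$, a lift defined on the punctured model extends continuously, and smoothly, across $\{z=0\}$ in a unique way, the extension still satisfies $\nu\tilde g=g\nu$, and on $\nu^{-1}(A)\cong A$ it agrees with the given action transported through $\nu$. The extended maps form a smooth covering $G$-action on $\tilde X$ fixing $\tilde x_0$. For uniqueness, two such actions yield, for each $g$, two lifts of $g|_{X^\circ}$ agreeing at a point (at $\tilde x_0$ when $x_0\notin A$, and pinned by the local model when $x_0\in A$), hence agreeing on the connected $\tilde X^\circ$ and, by continuity, on all of $\tilde X$. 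The main obstacle I anticipate is precisely the interaction of the two difficulties at once: forcing the individually lifted diffeomorphisms to fit into a single \emph{smooth} group action fixing $\tilde x_0$ while extending them smoothly over the ramification locus, especially in the case $x_0\in A$, where $\tilde x_0$ is a branch point and the naive pointed‑lift normalization must be replaced by the local branched model.
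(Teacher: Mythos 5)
Your proposal follows the paper's proof in all essentials: you show that the subgroup of $\pi_1(X\setminus A)$ defining the unbranched part of the covering is preserved by every induced automorphism (your ``preimage of a characteristic subgroup of cyclic $H_1$'' is the same observation as the paper's $g_*[\Pi,\Pi]=[\Pi,\Pi]$ argument for $_{\mu}\Pi=\{a:a^{\mu}\in[\Pi,\Pi]\}$), lift the action to $\tilde X\setminus A$ using the pointed lifting criterion, and extend over the branch locus --- the paper simply outsources the last two steps to Bredon (Ch.~I, Theorem 9.2 and Ch.~VI, Theorem 2.6). The one caveat is your case $x_0\in A$: there uniqueness genuinely fails, because every deck transformation fixes $\nu^{-1}(A)$ pointwise, so all $q$ lifts of a given $g$ fix $\tilde x_0$ and the local branched model cannot single one out; the paper implicitly avoids this by taking $\tilde x_0\in\tilde X\setminus A$ (and in the application $x_0$ is chosen on $\partial T$, off $A$).
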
  
\begin{proof}
	
According to \cite[Ch. I, Theorem 9.2]{B},   there exists a (unique) covering $ G $-action on $ \tilde{X}\setminus A$ with a fix point $ \tilde{x}_0 $ iff the subgroup $ \nu_*(\pi_1(\tilde{X}\setminus A, \tilde{x}_0))=_{\mu}\Pi $ of the group $ \pi_1(X\setminus A, x_0)=\Pi $ is invariant under $ G $-action on $ \Pi $. 
 An  
element $ a$ is in $_{\mu}\Pi $ iff $ a^{\mu}\in [\Pi,\Pi] $. It is clear that $ g_*[\Pi,\Pi]=[\Pi,\Pi] $ for any $ g\in G $. Thus, $ g_*(a^{\mu})=(g_*a)^{\mu}\in [\Pi,\Pi]$. Hence, $ g_*(a)\in$ $ _{\mu}\Pi $ and $g_*(_{\mu}\Pi)=$ $_{\mu}\Pi$.

By  \cite[Ch. VI, Theorem 2.6]{B}	the obtained $ G $-action on $ \tilde{X}\setminus A$ can be extended  to $ \tilde{X} $ by continuity so that $ A\subset \tilde{X} $ is $ G $-invariant. 
\end{proof}
\subsection{Lifting the involution}
	By Theorem \ref{G} the involution $ c:X\rightarrow X $ is covered by an involution $\tilde{c}$. According to \cite[Ch. VI, Theorems 2.2, 2.6]{B} there are invariant  closed tubular neighborhoods $ T $, $ \tilde{T} $ of $ A $ in $ X $, $ \tilde{X} $ with two-dimensional disks $ D $, $ \tilde{D} $ as the fibers  and with $ \nu(\tilde{T})=T $. Take a  point $ x_0\in\partial D\subset\partial T\cap X^c $ and one of its preimages $ y_0\in
	\partial \tilde{D} $. Considering that $c|_D$ is the reflection in the diameter with the end at $ x_0 $ and since $ q $ is odd, $\ti{c}|_{\ti{D}}$ is also the reflection in the diameter with the end at $ y_0 $  by uniqueness of $\tilde{c}$. 	
	By \cite[Ch. VI, Corollary 2.5]{B} the set $\tilde{X}^{\tilde{c}}=\mathrm{fix}\,\tilde{c}$ is a smooth submanifold of $ \tilde{X} $. Let $\tilde{B}=\tilde{X}^{\tilde{c}}\cap\nu^{-1}(B)$. It is clear that the restriction $\tilde{B}\rightarrow B $ of $ \nu $ is a diffeomorphism.

\subsection{Lifting the almost complex structure to a branched covering space}\label{acs}

Since $ X $ is almost complex along $ A^c\subset X^c $, using the definition of flexible curve, assume that the tangent plane at every point   of $ A^c $ is a complex line tangent to $A^c$.  Therefore the projection $pr: T\rightarrow A $ over $A^c$ is a $ U(1) $-bundle with the complex unit disk $ D $ as the fiber. The unbranched covering $\nu: \tilde{X}\setminus A\rightarrow X\setminus A$ allows to lift the almost complex structure of $ X\setminus A $ over $ B\setminus A $ to an almost complex structure of $ \tilde{X}\setminus A $ over $ \tilde{B}\setminus A $. Since the differential of $ \nu $ is zero on $ A^c$ the almost complex structure can be extended to $ \tilde{B}\supset A^c $ by continuity. It is clear that the involution $ \tilde{c} $ is anti-holomorphic with respect the almost complex structure along $ \tilde{B} $.  

\subsection{An automorphism of the covering and its invariant subspace}\label{M}
The automorphism group of the covering $ \nu $ is isomorphic to $ \Z_q $ and acts on the fibers of the normal bundle of the surface $ A\subset \tilde{X} $  like the group of rotations of the plane through angles that are multiples of $ 2\pi/q $. Let $ \tau $ be either of two generators of the group which acts like a rotation through $ 2\pi/q $.   

For $ \xi=\exp(\pi(q-1)\sqrt{-1}/q) $ consider a subspace $ M=\ker(\tau_*-\xi^{-1}\mathrm{id})\subset H_2(\tilde{X};\Cb)$, where $\mathrm{id}$ is the identity homomorphism. Let $ Q $ be the restriction to $ M $ of the Hermitian intersection form of $ \tilde{X} $, and $ \mathrm{sign}\,Q $ be its signature. From Rokhlin's calculations \cite[n. 5.4, 5.6]{R71} it follows that $ \dim M=b_2(X)+2g $, and $\mathrm{sign}\,Q=\sigma(X)-\frac{\xi^2(q^2-1)}{2q^2}$. 
\section{Proof of the main results}\label{p}
Note that in the case of a real algebraic curve the inequalities (\ref{1}),  (\ref{2}) are also true for $ m=1 $. Indeed, it is well known that $ \dfrac{b_2(X)+\sigma(X)}{2}\geq1 $ for a compact complex surface with $H_1(X)=0$ (see, e.g., \cite[Ch. IV, Theorem (2.7)]{BHPV}). Thus, for $ m=1 $ the inequalities (\ref{1}),  (\ref{2}) follow from the Harnack inequality, and (\ref{2}) is not an equality for $ \delta=1 $.  

There are only two cases below when $ q=m $ (in Subsection \ref{p1} )
 and $ q=h $ (in the rest of the text). For simplicity, the index $ q $ is omited in the objects depending on $ q $.

 \subsection{Covering classes over the membranes}\label{mem}
 Denote by $ \tilde{B}_i$ the connected component of $\tilde{B}$ such that $\nu(\tilde{B}_i)= B_i$, $i=1,\ldots,k$. The orientation of $ B_i $ defines an orientation of $ \tilde{B}_i $. 
  Let $ \gamma_{ij}\in H_2(\tilde{X};\Cb) $ be the class defined by the surface $ \mathrm{Cl}(\tilde{B}_i\cup\tau^j\tilde{B}_i) $ oriented in accordance with the orientation of $ \tilde{B}_i $. For $ i=1,\ldots,k $ put $ \beta_i=\sum_{j=1}^{q-1}\xi^j\gamma_{ij} $. Simple calculation shows that $ \beta_i\in M $. 
  
     Since each component $ Y_i $ of $ B $ is the closure of a union of some $ B_j $,  there is a class $\zeta_i=\sum\beta_j\in M $.
 \subsection{The intersection numbers of the covering classes}
 Since the almost complex stricture allows to apply Arnold's scheme \cite{A} for calculation the intersection numbers using tangent vector fields on surfaces, the calculation is exactly the same as in \cite[n. 1.3]{VZ}, i.e., $ \beta_i\circ\beta_r=0 $ when $ i\neq r $ and
 \begin{equation}\label{4}
 \beta_i^2=-\chi(B_i)q.	
 \end{equation} 
 \subsection{Proof of Theorem \ref{1}}\label{p1}
 Here we put $ q=m$. Consider a subset of $ \{\beta_1,\ldots,\beta_{k}\} $ that generate the subspace $ M^+ $ of $ M $ on which $ Q\geq0$. Since the classes $ \beta_i $ are pairwise orthogonal the set consists of  $ \beta_i $ with $ \beta_i^2>0 $, and these classes are linearly independent. By (\ref{4}) the number of such classes is $ k^- $, which is the dimension of $ M^+ $. It does not exceed $ (\dim M+\mathrm{sign}\,Q )/2 $. Using the formulas from \S \ref{M} we obtain Theorem~\ref{1}. 
 \subsection{The rank of the system of the covering classes}\label{rank}
  Let $ q=h $ everywhere in what follows.
 \begin{Lemma}\label{in}
 	 Let 	$in_*:H_2(B,A^c;\Z_p)\rightarrow H_2(X,A;\Z_p)$ be the inclusion homomorphism. Then $\mathrm{rk}(\beta_1,\ldots,\beta_{k})\geq\mathrm{rk}(in_*(b_1),\ldots,in_*(b_{k})) $ 
 \end{Lemma}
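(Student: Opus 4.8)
The plan is to compare the two systems after reduction to the prime field $\Z_p$, exploiting that the coefficient $\xi$ becomes $1$ there. Work over $\mathcal O=\Z[\xi]$; since $q=h=p^{\alpha}$ is a prime power, $p$ is totally ramified, $\mathfrak p=(1-\xi)$ is the unique prime over $p$, the residue field is $\mathcal O/\mathfrak p=\Z_p$, and $\xi\equiv1\pmod{\mathfrak p}$. The classes $\beta_i=\sum_{j=1}^{q-1}\xi^j\gamma_{ij}$ lie in $H_2(\tilde X;\Z)\otimes\mathcal O$, and their $\Cb$-rank equals their $\mathbb{Q}(\xi)$-rank. Passing to the discrete valuation ring $\mathcal R=\mathcal O_{\mathfrak p}$, the intersection with $\mathcal R^{k}$ of the kernel of $\mathbb{Q}(\xi)^k\to H_2(\tilde X;\mathbb{Q}(\xi))$, $e_i\mapsto\beta_i$, is a saturated submodule, hence a direct summand whose mod-$\mathfrak p$ reduction has dimension $k-\mathrm{rk}_{\Cb}(\beta_1,\dots,\beta_k)$. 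Thus the inequality follows once I prove the implication: \emph{every $\mathcal O$-linear relation $\sum_i c_i\beta_i=0$ reduces, modulo $\mathfrak p$, to a relation $\sum_i\bar c_i\,\mathrm{in}_*(b_i)=0$ in $H_2(X,A;\Z_p)$.}

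To prove this implication I first rewrite the relation in relative homology. Applying $j_*\colon H_2(\tilde X)\to H_2(\tilde X,A)$ and using that in $H_2(\tilde X,A)$ one has $j_*[\gamma_{ij}]=[\tilde B_i]-[\tau^{j}\tilde B_i]$ (the two sheets of $\mathrm{Cl}(\tilde B_i\cup\tau^j\tilde B_i)$ are glued along $A^c$ with opposite induced boundary orientations), summation over $j$ together with $\xi\equiv1\pmod{\mathfrak p}$ gives, modulo $\mathfrak p$,
\[
 j_*\bar\beta_i=-\sum_{l=0}^{q-1}\tau^{l}_*[\tilde B_i]=-[\nu^{-1}(B_i)]=-\nu^{!}(\mathrm{in}_*(b_i)),
\]
where $\nu^{!}\colon H_2(X,A;\Z_p)\to H_2(\tilde X,A;\Z_p)$ is the total-preimage homomorphism $[c]\mapsto[\nu^{-1}(c)]$ and I have used that $\nu$ identifies $\tilde B_i$ with $B_i$. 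Hence a relation $\sum_i c_i\beta_i=0$ yields $\nu^{!}\bigl(\sum_i\bar c_i\,\mathrm{in}_*(b_i)\bigr)=0$, and the whole lemma reduces to the injectivity of $\nu^{!}$ on the span of $\mathrm{in}_*(b_1),\dots,\mathrm{in}_*(b_k)$.

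Proving this injectivity is the main obstacle, and the naive tools fail in characteristic $p$: the composite $\nu_*\circ\nu^{!}$ is multiplication by $q\equiv0\pmod p$, so neither the direct image nor the transfer detects $\nu^{!}(\mathrm{in}_*(b_i))$; equivalently $\Z_p[\Z_q]=\Z_p[t]/(t-1)^{q}$ is a local ring on which $\tau_*-1$ is nilpotent, so the eigenspace splitting available over $\Cb$ collapses. To obtain injectivity I would invoke the hypothesis of Theorem~\ref{2}: since $\pi_1(X\setminus A)$ is abelian, Theorem~\ref{ab} gives $H_1(\tilde X)=0$, and $\tilde X$ carries the $\Z_q$-action whose fixed-point set is exactly the branch surface $A$. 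The statement that the total preimages $[\nu^{-1}(B_i)]$ span a subspace of dimension at least $\mathrm{rk}(\mathrm{in}_*(b_i))$ is precisely of the type supplied by the Smith-theoretic calculations of \cite[\S2]{VZ}, which describe $H_*(\tilde X;\Z_p)$ as a $\Z_p[\Z_q]$-module in terms of the downstairs data and $H_*(A)$.

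Finally I would observe that the $\beta_i$ with $\chi(B_i)\neq0$ are already independent over $\Cb$ by the diagonal form of their intersection matrix (\ref{4}), so the genuinely new content concerns the isotropic (parabolic) components $\chi(B_i)=0$; it is exactly for these that $\nu^{!}$-injectivity, rather than the intersection form, is needed. An equivalent route is to detect the isotropic classes directly over $\Cb$ by pairing them against eigen-lifts of eigenvalue $\xi$ of a system of membranes dual to the $b_i$, computing the pairings by Arnold's tangent-field method as in \cite[n.~1.3]{VZ}; this meets the same essential difficulty, namely producing eigenvalue-$\xi$ partners for classes that are supported away from the part of $H_2(\tilde X)$ on which $\tau_*$ acts trivially.
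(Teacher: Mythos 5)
Your overall route is the same as the paper's: reduce the $\Z[\xi]$-coefficients modulo the prime $(1-\xi)$ above $p$ (equivalently, apply the ring homomorphism $\xi\mapsto 1$, which is nontrivial exactly because $q=h$ is a prime power), observe that rank can only drop under this reduction, identify the reduced classes $\bar\beta_i$ with transferred membrane classes downstairs, and conclude via an injectivity statement coming from the Smith theory of the $\Z_q$-action on $\tilde X$. Your computation $j_*\bar\beta_i=-[\nu^{-1}(B_i)]$ and your localization/saturation argument for the rank comparison are both sound and parallel the paper's use of coprime coefficients in $\Z[\xi]$.

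The genuine gap is exactly where you say it is: the injectivity of $\nu^{!}$ (equivalently, of the relevant Smith homomorphism) on the span of $in_*(b_1),\ldots,in_*(b_k)$ is never established. You correctly diagnose that the transfer composed with $\nu_*$ is multiplication by $q\equiv 0\bmod p$ and that the eigenspace decomposition collapses over $\Z_p$, but then you only assert that a statement ``of the type'' needed is supplied by the Smith-theoretic calculations of \cite[\S 2]{VZ}, without stating or proving it. The paper closes precisely this hole by citing \cite[Lemma 2.3.A]{VZ}: the homomorphism $\tilde\alpha_2:H_2(X,A;\Z_p)\rightarrow H_2(\tilde X;\Z_p)$ from the Smith homology sequence is a monomorphism, and $\tilde\alpha_2\circ in_*$ sends $b_i$ to $\beta'_i=\mu_*\lambda_*^{-1}(\beta_i)$ directly in absolute homology, so that $\mathrm{rk}(\beta'_1,\ldots,\beta'_k)=\mathrm{rk}(in_*(b_1),\ldots,in_*(b_k))$ with no transfer map and no passage to $H_2(\tilde X,A)$ needed. (Note also that the paper must first justify that $\lambda_*^{-1}(\beta_i)$ exists, i.e.\ that $\lambda_*:H_2(\tilde X;\Z[\xi])\rightarrow H_2(\tilde X;\Cb)$ is a monomorphism; this uses $H_1(\tilde X)=0$ from Theorem \ref{ab} to kill $\mathrm{Tors}\,H_2(\tilde X)$ and $H_3(\tilde X)$. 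Your tensor-product formulation quietly assumes the analogous torsion-freeness.) Your closing remarks about the components with $\chi(B_i)\neq 0$ being independent via the intersection form do not repair this: the lemma's content is precisely about the parabolic components, and by your own admission the alternative route ``meets the same essential difficulty.'' To complete the proof you must quote and use the specific monomorphism statement, not its genre.
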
 
\begin{proof}
 Clearly, the classes $\{\beta_1,\ldots,\beta_{k}\} $ belong to the image of the homomorphism  $\lambda_*:H_2(\tilde{X};\Z[\xi])\rightarrow H_2(\tilde{X};\Cb)$ induced by the inclusion $\lambda:\Z[\xi]\rightarrow \Cb$. By Theorem \ref{ab} we have $ H_1(\tilde{X})=0 $. Using the duality and the universal coefficient theorem we get $ H_3(\tilde{X})=0 $ and $ \mathrm{Tors} H_2(\tilde{X})=0 $, and consequently $ H_3(\tilde{X};\Cb/\Z[\xi])=0 $. Therefore $ \lambda_* $ is a monomorphism. Hence  
 $\mathrm{rk}(\beta_1,\ldots,\beta_{k})=\mathrm{rk}(\lambda_*^{-1}(\beta_1),\ldots,\lambda_*^{-1}(\beta_{k})) $.

 By $ \mu $ denote the ring homomorphism $ \Z[\xi]\rightarrow\Z_p $ that takes $ \xi $ to $ 1 $. It is not trivial since $ q$ is a prime power (see \cite[\S 1.5]{VZ}). Let $ \beta'_i=\mu_*\lambda_*^{-1}(\beta_i)\in H_2(\tilde{X};\Z_p) $.
 If $\lambda_{1}\beta_1+\ldots+\lambda_{k}\beta_{k}=0  $ is a nontrivial linear relation with the coefficients $ \lambda_i\in\Z[\xi] $ that are coprime in $ \Z[\xi] $ then $\mu(\lambda_{1})\beta'_1+\ldots+\mu(\lambda_{k})\beta'_{k}=0  $ is a nontrivial linear relation (see \cite[\S 1.5]{VZ}). 
 Thus $\mathrm{rk}(\beta_1,\ldots,\beta_{k})\geq\mathrm{rk}(\beta'_1,\ldots,\beta'_{k})) $.  By \cite[Lemma 2.3.A]{VZ} the homomorphism $ \tilde{\alpha}_2:H_2(X,A;\Z_p)\rightarrow H_2(\tilde{X};\Z_p) $ from the Smith homology sequence is a monomorphism and its composition $\tilde{\alpha}_2\circ in_*$   takes the membrane class $ b_i $ to $ \beta'_i $. Since the membrane classes form a basis of $ H_2(B,A^c;\Z_p) $ we have $\mathrm{rk}(\beta'_1,\ldots,\beta'_{k}))=k-\dim\ker\,in_*=\mathrm{rk}(in_*(b_1),\ldots,in_*(b_{k}))$. Thus $\mathrm{rk}(\beta_1,\ldots,\beta_{k})\geq\mathrm{rk}(in_*(b_1),\ldots,in_*(b_{k})) $.  
	\end{proof} 
\begin{Lemma}\label{ker}
For the inclusion homomorphism 	$ \dim\ker\,in_*\leq \rho+\delta $ where $ \rho$ and $ \delta $ are defined in \S \ref{main}.
\end{Lemma}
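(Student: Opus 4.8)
The plan is to compare, with $\Z_p$-coefficients, the long exact sequences of the pairs $(B,A^c)$ and $(X,A)$, which are linked by the inclusion $(B,A^c)\hookrightarrow(X,A)$ into a commutative ladder. Writing $j_*\colon H_1(A^c;\Z_p)\to H_1(A;\Z_p)$ for the map induced by $A^c\hookrightarrow A$ and $\mathrm{in}_*\colon H_2(B;\Z_p)\to H_2(X;\Z_p)$ for the homomorphism whose kernel has dimension $\rho$, the relevant part of the ladder is
\[
\begin{array}{ccccc}
H_2(B) & \xrightarrow{\ \iota_B\ } & H_2(B,A^c) & \xrightarrow{\ \partial_B\ } & H_1(A^c)\\[2pt]
\downarrow{\scriptstyle\mathrm{in}_*} & & \downarrow{\scriptstyle in_*} & & \downarrow{\scriptstyle j_*}\\[2pt]
H_2(X) & \xrightarrow{\ \iota_X\ } & H_2(X,A) & \xrightarrow{\ \partial_X\ } & H_1(A),
\end{array}
\]
all groups taken with $\Z_p$-coefficients. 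Two preliminary facts will carry most of the weight. Since $A^c$ is a $1$-manifold, $H_2(A^c;\Z_p)=0$, so $\iota_B$ is injective. And since $p\mid m\mid\xi$, the class $\xi$ reduces to $0$ in $H_2(X;\Z_p)$; hence the homomorphism $H_2(A;\Z_p)\to H_2(X;\Z_p)$, which sends $[A]$ to $\xi\bmod p$, is zero, so by exactness of the lower row $\iota_X$ is injective as well.

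Next I would identify $\dim\ker j_*$ with $\delta$. The exact sequence of $(A,A^c)$ reads
\[
0=H_2(A^c;\Z_p)\longrightarrow H_2(A;\Z_p)\xrightarrow{\ q\ } H_2(A,A^c;\Z_p)\xrightarrow{\ \partial\ } H_1(A^c;\Z_p)\xrightarrow{\ j_*\ } H_1(A;\Z_p),
\]
so $\ker j_*=\operatorname{im}\partial$ has dimension $\dim H_2(A,A^c;\Z_p)-\dim\operatorname{im}q=\dim H_2(A,A^c;\Z_p)-1$, the $-1$ coming from the injectivity of $q$ (again because $H_2(A^c;\Z_p)=0$). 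As $A$ is orientable, every component of $A\setminus A^c$ is orientable, whence $H_2(A,A^c;\Z_p)\cong\Z_p^{\,c}$ with $c$ the number of these components; by definition $c=2$ in type~I and $c=1$ in type~II. Therefore $\dim\ker j_*=c-1=\delta$.

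Finally I would run the diagram chase. For $\alpha\in\ker in_*$, commutativity of the right square gives $j_*(\partial_B\alpha)=\partial_X(in_*\alpha)=0$, so $\partial_B$ maps $\ker in_*$ into $\ker j_*$; the resulting exact sequence
\[
0\longrightarrow\ker in_*\cap\ker\partial_B\longrightarrow\ker in_*\xrightarrow{\ \partial_B\ }\ker j_*
\]
yields $\dim\ker in_*\le\dim(\ker in_*\cap\ker\partial_B)+\dim\ker j_*$. By exactness of the top row $\ker\partial_B=\operatorname{im}\iota_B$, and for $y\in H_2(B)$ the relation $in_*(\iota_B y)=\iota_X(\mathrm{in}_*y)$ together with the injectivity of $\iota_X$ shows that $\iota_B y\in\ker in_*$ iff $\mathrm{in}_*y=0$. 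Hence $\ker in_*\cap\ker\partial_B=\iota_B(\ker\mathrm{in}_*)$, which has dimension $\rho$ because $\iota_B$ is injective. Combining the two estimates gives $\dim\ker in_*\le\rho+\delta$.

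The step I expect to be the real content, rather than formal, is the vanishing of the cross-term: a bare chase would also leave a contribution from $\operatorname{im}\mathrm{in}_*\cap\operatorname{im}\bigl(H_2(A;\Z_p)\to H_2(X;\Z_p)\bigr)$, and it is exactly the divisibility $p\mid\xi$ — which makes $\iota_X$ injective — that forces this to be zero and produces the clean bound. The only other point requiring care is the precise evaluation $\dim\ker j_*=\delta$, i.e.\ reading off the type~I/II dichotomy as the count $c-1$ of components of $A\setminus A^c$.
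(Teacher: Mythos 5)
Your proposal is correct and follows essentially the same route as the paper: both compare the exact sequences of the pairs $(B,A^c)$ and $(X,A)$, use $H_2(A^c;\Z_p)=0$ and the divisibility $p\,|\,\xi$ to get injectivity of the two relativization maps, and split $\ker in_*$ into a piece of dimension $\rho$ coming from $H_2(B;\Z_p)$ and a piece mapped by $\partial^c$ into $\ker\bigl(H_1(A^c;\Z_p)\to H_1(A;\Z_p)\bigr)$ of dimension $\delta$. The only difference is cosmetic: you prove the identification $\dim\ker j_*=\delta$ via the sequence of $(A,A^c)$, where the paper simply cites it as well known.
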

\begin{proof}
From the exactness of the homology sequence of the pair $ B, A^c $ it follows that $rel^c: H_2(B;\Z_p)\rightarrow H_2(B,A^c;\Z_p) $ is a monomorphism. So the basis $y_1,\dots, y_r $ for $ H_2(B;\Z_p) $ can be  enlarged
to a basis $rel^c y_1,\dots,rel^cy_r,b_{r+1},\dots,b_k$ for $ H_2(B,A^c;\Z_p)$ and $\partial^c b_{r+1},\dots,\partial^c b_k  $ is a basis for $ \mathrm{im}\,\partial^c\subset H_1(A^c) $. Since $ \xi=0\,\mathrm{mod}\,p$  the inclusion homomorphism 	$H_2(A;\Z_p)\rightarrow H_2(X;\Z_p)$ is zero. Thus we have a segment of the homology exact sequence of the pair $ X, A $ 
\begin{equation}\label{seq}
	0\rightarrow H_2(X)\stackrel{rel}\rightarrow H_2(X,A)\stackrel{\partial}\rightarrow H_1(A)\rightarrow0 
\end{equation}
with $ rel $ being a monomorphism. Therefore $ \dim\ker\,in_*=\dim(\mathrm{im}\partial^c\cap\ker\,in_1)+\dim\ker\,in_2$ where $in_1:H_1(A^c;\Z_p)\rightarrow H_1(A;\Z_p))$ and $in_2:H_2(B;\Z_p)\rightarrow H_2(X;\Z_p))$ are the inclusion homomorphisms.
It is well known that $ \delta=\dim\ker\,in_1$. Then $ \dim\ker\,in_*\leq \rho+\delta $.  
\end{proof}
 Let $ d_i\in H_1(A;\Z_p) $ denote the class realizable by the boundary of $ B_i $. 
  
 The following corollary is evident.
 \begin{Corollary}\label{I}
 	In the notation of the previous proof,  $ d_i=in_1\partial^c(b_i)=\partial\, in_*(b_i) $. If $ d_1,\ldots,d_k $ are linear dependent then $ A $ is of type I and the equality $ in_1[A^c]=0$ (see (\ref{ext1}) of Theorem \ref{3}) gives a nontrivial linear relation for $ d_1,\ldots,d_k $.
 \end{Corollary}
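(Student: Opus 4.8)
The plan is to treat the two displayed equalities first and then the implication. The first equality $d_i=in_1\partial^c(b_i)$ is immediate from the definitions: the boundary $\partial B_i$ is a $1$-cycle carried by $A^c$, its class in $H_1(A^c;\Z_p)$ is $\partial^c(b_i)$, and pushing it into $A$ by $in_1$ is exactly the class $d_i$ realized by $\partial B_i$ in $A$. The second equality $in_1\partial^c(b_i)=\partial\, in_*(b_i)$ is the naturality of the connecting homomorphism: the inclusion of pairs $(B,A^c)\hookrightarrow(X,A)$ induces a morphism of long exact homology sequences, hence the commuting square $\partial\circ in_*=in_1\circ\partial^c$ on $H_2(B,A^c;\Z_p)$. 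I would record this square explicitly, since it is what lets me carry any relation among the $d_i$ back and forth between $A^c$ and $A$.

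For the implication I would start from a relation $\sum\lambda_i d_i=0$ and rewrite it, via the first part, as $\partial^c(\sum\lambda_i b_i)\in\ker in_1$. The governing fact, already isolated in the proof of Lemma \ref{ker}, is $\dim\ker in_1=\delta$. Thus a relation among the $d_i$ that is not already a relation among the classes $\partial^c(b_i)$, i.e. one with $\partial^c(\sum\lambda_i b_i)\neq0$, produces a nonzero element of $\ker in_1$ and forces $\delta=1$, that is type I; in type II one has $\ker in_1=0$, so every relation of the $d_i$ merely descends from one of the $\partial^c(b_i)$ and nothing new appears.

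Once type I is known I would identify this new relation with $in_1[A^c]=0$. In type I the complex orientation presents $A^c$ as $\partial A^+$, so $in_1[A^c]=0$; moreover $[A^c]\neq0$ in $H_1(A^c;\Z_p)$ because each circle of $A^c$ enters with a unit coefficient. Since $\dim\ker in_1=\delta=1$, this yields $\ker in_1=\langle[A^c]\rangle$. The nonzero element $\partial^c(\sum\lambda_i b_i)$ of $\ker in_1$ lies also in $\mathrm{im}\,\partial^c$, hence is a unit multiple of $[A^c]$, so after rescaling there are $x_1,\ldots,x_k\in\Z_p$ with $[A^c]=\partial^c(\sum x_j b_j)$. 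Applying $in_1$ and the first part converts $in_1[A^c]=0$ into the asserted relation $\sum x_j d_j=0$, nontrivial precisely because $\sum x_j\partial^c(b_j)=[A^c]\neq0$. This $x=\sum x_j b_j$ is exactly the class fed into part (\ref{ext1}) of Theorem \ref{3}.

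The step I expect to be delicate is fixing the right notion of dependence. Because the $r$ components $Y_i$ of $B$ already impose the relations $\partial^c(rel^c y_i)=0$, the $d_i$ are dependent in every case, so the real content is the \emph{extra} dependence $\mathrm{rk}(d_1,\ldots,d_k)<\mathrm{rk}(\partial^c b_1,\ldots,\partial^c b_k)$, equivalently $\ker in_1\cap\mathrm{im}\,\partial^c\neq0$. Making this precise is what simultaneously forces type I and guarantees $[A^c]\in\mathrm{im}\,\partial^c$, so that the relation can actually be exhibited as $in_1[A^c]=0$ rather than only asserted abstractly.
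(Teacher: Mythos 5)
The paper offers no argument for this corollary at all --- it is simply declared ``evident'' --- so there is no proof to compare yours against step by step. What you have written is a correct and complete argument for the intended statement, and it rests on exactly the two ingredients the surrounding text already uses: naturality of the connecting homomorphism for the inclusion of pairs $(B,A^c)\hookrightarrow(X,A)$, which gives $\partial\circ in_*=in_1\circ\partial^c$ and hence both displayed equalities, and the identity $\dim\ker in_1=\delta$ invoked in the proof of Lemma \ref{ker}. Your further observations --- that in type I one has $\ker in_1=\langle[A^c]\rangle$ with $[A^c]\neq0$, and that a genuinely new relation among the $d_i$ forces $[A^c]\in\mathrm{im}\,\partial^c$ so that $in_1[A^c]=0$ can actually be rewritten as $\sum x_jd_j=0$ --- are precisely what part (\ref{ext1}) of Theorem \ref{3} needs, so nothing is missing.

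The point you flag as delicate is in fact a genuine imprecision in the statement as printed, not a defect of your proof. Whenever $B$ has an orientable component $Y_i$, the identity $\partial^c rel^c(y_i)=0$ already yields $\sum_{B_j\subset Y_i}\pm\,d_j=0$ with unit coefficients, so the $d_i$ are linearly dependent in the literal sense for every curve with $r\geq1$ --- including type II curves on, say, a hyperboloid, where $B$ is a torus. Read literally, the corollary would then wrongly force type I. Your reformulation --- the operative hypothesis is $\ker in_1\cap\mathrm{im}\,\partial^c\neq0$, i.e.\ a relation among the $d_i$ whose coefficient vector does not already lie in $\ker\partial^c=\mathrm{im}\,rel^c$ --- is the reading under which the corollary is true, and it is also the reading that makes its use in Lemma \ref{B} coherent: there the coefficients $\mu(\lambda_i)$ are never all zero, so the dichotomy between a ``nontrivial'' and a ``trivial'' relation $\sum\mu(\lambda_i)d_i=0$ only makes sense if ``trivial'' means $\sum\mu(\lambda_i)b_i\in\mathrm{im}\,rel^c$, which is exactly what yields $\mu(\mu_{r+1})=\cdots=\mu(\mu_k)=0$ in the adapted basis. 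So your proof not only fills the gap the paper leaves but corrects the statement to the form in which it is actually applied.
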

 Put $ z_i=in_2(y_i) $. 
\begin{Lemma}\label{B}	
	For a nontrivial linear relation \begin{equation}\label{l1}
		\lambda_{1}\beta_1+\ldots+\lambda_{k}\beta_{k}=0 
	\end{equation}  with $ \lambda_i\in\Z[\xi] $ being coprime in $ \Z[\xi] $,  either $ \mu(\lambda_1)d_1+\ldots+\mu(\lambda_k)d_k=0$ is  a nontrivial linear relation and $ A $ is of type I, or (\ref{l1})  can be rewritten as
\begin{equation}\label{l2}
	\mu_1\zeta_1+\ldots+\mu_r\zeta_r=0
\end{equation} (see \S \ref{mem}) with some $\mu_i\in\Z[\xi] $ being coprime in $ \Z[\xi] $ and  $\mu(\mu_1)z_1+\ldots+\mu(\mu_r)z_r=0$ is  a nontrivial linear relation.
\end{Lemma}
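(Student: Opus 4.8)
The plan is to transport the given relation~(\ref{l1}) downward through the maps already set up, from $H_2(\tilde X;\Cb)$ to $H_1(A;\Z_p)$, and then decide the alternative according to where the resulting class lives. First I would invoke the monomorphism $\lambda_*$ of Lemma~\ref{in} to pull (\ref{l1}) back to $\sum_j\lambda_j\lambda_*^{-1}(\beta_j)=0$ in $H_2(\tilde X;\Z[\xi])$, apply the coefficient reduction $\mu_*$, and use $\beta'_j=\tilde\alpha_2(in_*(b_j))$ together with the injectivity of $\tilde\alpha_2$ to arrive at
\[
x:=\sum_j\mu(\lambda_j)\,b_j\in\ker\,in_*,\qquad\text{i.e.}\quad \sum_j\mu(\lambda_j)\,in_*(b_j)=0 .
\]
Because the $\lambda_j$ are coprime in $\Z[\xi]$ and $\mu$ is nontrivial, not every $\mu(\lambda_j)$ vanishes, so $x$ carries genuine information.

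Next I would expand $x$ in the basis $rel^c y_1,\dots,rel^c y_r,b_{r+1},\dots,b_k$ of $H_2(B,A^c;\Z_p)$ from the proof of Lemma~\ref{ker}, writing $x=\sum_i a_i\,rel^c y_i+\sum_{j>r}c_j\,b_j$. Applying $\partial\circ in_*$ and using $\partial\,in_*(b_j)=d_j$ (Corollary~\ref{I}) together with $\partial^c\circ rel^c=0$ gives $\sum_j\mu(\lambda_j)d_j=\sum_{j>r}c_j\,d_j$. The whole dichotomy is then read off from the coefficients $c_{r+1},\dots,c_k$.

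If some $c_j\neq0$, then $\sum_j\mu(\lambda_j)d_j=\sum_{j>r}c_j d_j=0$ is a nontrivial linear relation among $d_1,\dots,d_k$; hence these classes are linearly dependent and Corollary~\ref{I} forces $A$ to be of type~I, which is the first alternative. If instead $c_j=0$ for all $j>r$, then $x=\sum_i a_i\,rel^c y_i$. Using the commutative square $in_*\circ rel^c=rel\circ in_2$ and $rel^c y_i=\sum_{j\in S_i}b_j$ (where $S_i$ indexes the membranes making up $Y_i$), I get $0=in_*(x)=rel\bigl(\sum_i a_i z_i\bigr)$; since $rel$ is injective by the exactness of~(\ref{seq}), this is the relation $\sum_i a_i z_i=0$. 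It is nontrivial: comparing coefficients against the basis $\{b_j\}$ shows $\mu(\lambda_j)=a_{i(j)}$ is constant on each component $Y_{i(j)}$, and these values are not all zero. Choosing $\mu_i\in\Z[\xi]$ with $\mu(\mu_i)=a_i$ and regrouping (\ref{l1}) by components packages this as~(\ref{l2}) with $\sum_i\mu(\mu_i)z_i=0$.

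The main obstacle is the coefficient bookkeeping in the second alternative, namely producing~(\ref{l2}) as a genuine $\Z[\xi]$-relation with $\mu_i$ coprime while keeping the reduced relation $\sum_i\mu(\mu_i)z_i=0$ nontrivial. The difficulty is that the component-constancy of the coefficients is only established modulo the prime $\ker\mu$, not in $\Z[\xi]$ itself, so the passage between $\Z[\xi]$- and $\Z_p$-coefficients has to be handled with care; the operative output for Theorem~\ref{3} is in any case the mod-$p$ relation among the $z_i=in_2(y_i)$. The underlying intersection-theoretic computations are identical to those in \cite[\S1]{VZ}, so I expect this bookkeeping to be the only genuinely delicate point once the decomposition of $x$ in $\ker\,in_*$ is in hand.
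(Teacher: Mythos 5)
Your reduction of (\ref{l1}) to the mod-$p$ statement $x=\sum_j\mu(\lambda_j)b_j\in\ker in_*$, the expansion of $x$ in the basis $rel^c y_1,\dots,rel^c y_r,b_{r+1},\dots,b_k$ from Lemma~\ref{ker}, and the dichotomy on the coefficients $c_{r+1},\dots,c_k$ all coincide with what the paper does (the paper phrases the dichotomy as whether $\mu(\lambda_1)d_1+\dots+\mu(\lambda_k)d_k=0$ is a nontrivial relation, which amounts to the same test). The first alternative via Corollary~\ref{I}, and the derivation of $\sum_i a_i z_i=0$ from the injectivity of $rel$ in the second alternative, also match the paper.

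The genuine gap is exactly the one you flag yourself: your argument only produces the $\Z_p$-relation $\sum_i a_i z_i=0$, and ``choosing $\mu_i\in\Z[\xi]$ with $\mu(\mu_i)=a_i$ and regrouping'' does not yield (\ref{l2}) as an identity in $H_2(\tilde{X};\Cb)$ --- an arbitrary lift of the $a_i$ has no reason to satisfy $\sum\mu_i\zeta_i=0$, and the component-constancy of the $\lambda_j$ is only known modulo $\ker\mu$. This matters, because the proof of Theorem~\ref{3} multiplies (\ref{l2}) by $\zeta_i$ inside $H_2(\tilde{X};\Cb)$, so a relation that holds only after reduction mod $p$ is not enough. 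The paper avoids the problem by reordering your two steps: since $\zeta_i=\sum_{j\in S_i}\beta_j$ (\S\ref{mem}) and the passage from $b_1,\dots,b_k$ to $rel^cy_1,\dots,rel^cy_r,b_{r+1},\dots,b_k$ is an integral unimodular substitution, the relation (\ref{l1}) itself is first rewritten over $\Z[\xi]$ as $\mu_1\zeta_1+\dots+\mu_r\zeta_r+\mu_{r+1}\beta_{r+1}+\dots+\mu_k\beta_k=0$ with $\mu_i\in\Z[\xi]$ determined by the $\lambda_j$; only then does one reduce mod $p$, where the triviality of $\sum\mu(\lambda_j)d_j=0$ forces $\mu(\mu_{r+1})=\dots=\mu(\mu_k)=0$ and hence the nontrivial relation $\sum\mu(\mu_i)z_i=0$, with the $\mu_i$ already living in $\Z[\xi]$. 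So the missing idea is not extra bookkeeping on top of your computation but lifting the basis change to $\Z[\xi]$ before reducing. (Even the paper's version discards the tail terms of the rewritten relation while controlling their coefficients only in $\Z_p$; what is actually used downstream is the rewritten $\Z[\xi]$-relation together with the vanishing of the reduced tail coefficients, and your purely mod-$p$ route does not supply the former.)
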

\begin{proof}
	As above, $\mu(\lambda_{1})\beta'_1+\ldots+\mu(\lambda_{k})\beta'_{k}=0  $ is a nontrivial linear relation.  Since $ d_i=\partial\tilde{\alpha}_2^{-1}(\beta'_i) $ then  $ \mu(\lambda_1)d_1+\ldots+\mu(\lambda_k)d_k=0$. If it is  a nontrivial linear relation then $ A $ is of type I by Corollary~\ref{I}, and the first case is proved. 
	
	We can rewrite (\ref{l1}) as $ \mu_1\zeta_1+\ldots+\mu_r\zeta_r+\mu_{r+1}\beta_{r+1}+\ldots+\mu_{k}\beta_{k}=0 $, getting also $ \mu(\mu_1)\zeta'_1+\ldots+\mu(\mu_r)\zeta'_r+\mu(\mu_{r+1})\beta'_{r+1}+\ldots+\mu(\mu_{k})\beta'_{k}=0 $ where $ \zeta'_i=\mu_*\lambda_*^{-1}(\zeta_i) $, because $rel^c y_1,\dots,rel^cy_r,b_{r+1},\dots,b_k$ is a basis for $ H_2(B,A^c;\Z_p)$  and $ \zeta'_i=\tilde{\alpha}_2in_*rel^c(y_i)=\tilde{\alpha}_2rel(z_i)$ where $\tilde{\alpha}_2$ and $ rel  $ are monomorphisms (see Proof of Lemma~\ref{ker}). If the relation $ \mu(\lambda_1)d_1+\ldots+\mu(\lambda_k)d_k=0$ is trivial then $ \mu(\mu_{r+1})=\ldots=\mu(\mu_{k})=0 $
	by the exactness of the sequence (\ref{seq}). Thus  
$\mu(\mu_1)z_1+\ldots+\mu(\mu_r)z_r=0$	 is  a nontrivial linear relation.  
\end{proof}
 \subsection{Proof of Theorem \ref{2}}\label{p2}
 Consider a maximal subset $ L $ of the set $\{\beta_1,\ldots,\beta_{k}\} $ that generates a space whose intersection form is nonnegative. Since the classes $ \beta_i $ are pairwise orthogonal, $ L $ consists of $ \beta_i $ with $ \beta_i^2\geq0 $. By (\ref{4}) the number of such classes is $ k^0+k^- $. Denote by $ l $ the dimension of $ L $. Since $ \beta_i\in M $ and according to \cite[\S 4.2]{R71} the form $ Q $ is nondegenerate, $ l\leq(\dim M+\mathrm{sign}\, Q)/2 $. On the other hand, $k^0+k^-\leq l+\rho+\delta$ by \ref{rank}. Combining these two inequalities and the information about $ M $ from Subsection \ref{M} we obtain (\ref{i2}).
 \subsection{Proof of Theorem \ref{3}}
 Suppose that equality is attained in (\ref{i2}).  
 Use the arguments of Subsections \ref{rank}, \ref{p2}.  
 
 If $ \delta=1 $ then $\mathrm{rk}(\beta_1,\ldots,\beta_{k})=k-\rho-1$, the curve $ A $ is of type I and $ A^c $, endowed  with a complex orientation, gives    a generator $ [A^c] $ of $ \ker in_1\subset \mathrm{im}\partial^c$.   Therefore there is a class $ x=\sum x_jb_j\in H_2(B,A^c;\Z_p) $ with $ \partial^c x=[A^c] $. Besides there is a nontrivial linear relation~(\ref{l1})
 	   such that, by Corollary~\ref{I} and Lemma~\ref{B}, $in_1[A^c]= \mu(\lambda_1)d_1+\ldots+\mu(\lambda_k)d_k=0$ is  a nontrivial linear relation. Again, by Corollary~\ref{I} we have $d_i=in_1\partial^c(b_i) $. So the numbers $ x_1,\ldots,x_k $ are proportional to $\mu(\lambda_1),\ldots,\mu(\lambda_k)$. Therefore if $ x_i\neq0 $ then $ \lambda_i\neq0 $. Multiplying (\ref{l1}) by $ \beta_i $ and using that $ \beta_1,\ldots,\beta_{k} $ are pairwise orthogonal with their squares determined by (\ref{4}), we deduce that $ \chi(B_i)=0 $ for all $ i $ with $ x_i\neq0 $.

If $ \delta=0 $ then $\mathrm{rk}(\beta_1,\ldots,\beta_{k})=k-\rho$ and the curve $ A $ is of type II. If $\rho>0 $, by Lemma \ref{B}, there is a nontrivial linear relation (\ref{l2}) such that  $\mu(\mu_1)z_1+\ldots+\mu(\mu_r)z_r=0$ is also a nontrivial linear relation. 
Thus for $x_j=\mu(\mu_j)$ we have $ \sum x_jy_j\in\ker in_* $ and if $x_i=\mu(\mu_i)\neq0 $ then $ \mu_i\neq0 $. Multiplying (\ref{l2}) by $ \zeta_i $ and taking into account that $ \zeta_1,\ldots,\zeta_{k} $ are, clearly, pairwise orthogonal with $ \zeta_i^2=-\chi(Y_i)h $ (a consequence of (\ref{4})), we deduce that $ \chi(Y_i)=0 $ for all $ i $ with $ x_i\neq0 $.
\subsection{Proof of Remark \ref{rho}}
Suppose that $ \lambda_{1}z_1+\ldots+\lambda_{r}z_{r}=0 $ with some $ \lambda_{i}\in \Z_p $. Multiplying this equality by $ z_i $ and taking into account that $ z_1,\ldots,z_{r} $ are, clearly, pairwise orthogonal with $ z_i^2\equiv-\chi(Y_i)\, \mathrm{mod}\, p $ (see \cite[Lemma 6]{A}), we deduce that $ \chi(Y_i)\equiv0\, \mathrm{mod}\, p$ for all $ i $ with $ \lambda_i\neq 0\, \mathrm{mod}\, p $. Thus $ \ker\,in_* $ is a subspace of the linear span of  $ Y_i $ with $ \chi(Y_i)\equiv0\, \mathrm{mod}\, p$, and the inequality is proved. 
  \section{Application to real algebraic curves on surfaces}\label{Ap}
  Here we apply the inequalities (\ref{i1}), (\ref{i2}) to nonsingular real algebraic curves on some simply connected surfaces.
  \subsection{Plane curves}
  	Recall (see \cite{VZ}) that for a plane curve of odd degree $ m $ the inequalities (\ref{i1}), (\ref{i2}) take the form
  	
  $k^-\leq\frac{(m-3)^2}{4}$, 
    
  $k^-+k^0\leq\frac{(m-3)^2}{4}+\frac{m^2-h^2}{4h^2}+\delta $ 
  
  where $ h=p^{\alpha}\,|\,m $ for a prime $ p $ and the maximal $ \alpha $. They are sharp for $ m=3 $ (a curve with an oval) and for $ m=5 $ (a curve with a nest of two ovals). 
\subsection{Curves on a quadric}  
  A curve of bidegree $ (a,b) $  on a hyperboloid or ellipsoid is ample if $ a>0$, $b>0 $ (see \cite[Ch. V, 1.10.1]{H}). For such a curve the inequalities (\ref{i1}),~(\ref{i2}) take the form
  
  $k^-\leq \dfrac{ab(m^2+1)}{2m^2}-a-b+2,$
  
  $ k^-+k^0\leq \dfrac{ab(h^2+1)}{2h^2}-a-b+3+\delta$
  
  with an odd $ m\,|\,\mathrm{gcd}(a,b) $ and $ h=p^{\alpha}\,|\,m $ for a prime $ p $ and the maximal $ \alpha $. Both inequalities are sharp  for curves of bidegree $ (3,3) $ on a hyperboloid. The first one becomes the equality for a curve with $ l $ ovals, $ 1\leq l\leq4 $, (and with a real component of degree (1,1)); the second one is sharp for a small perturbation of the intersection of the hyperboloid with three parallel planes.
\subsection{Curves on a Hirzebruch surface}  
   Let $\pi: \Sigma_e\rightarrow P^1 $ be a Hirzebruch surface with the zero section $ Y,\, Y^2=e, $ and a fiber $ F $. A nonsingular real curve  
   $A=aY+bf\subset\Sigma_e$ of bidegree $ (a,b) $ is ample if $ a>0$, $b>0 $ (see \cite[Ch. V, 2.18]{H}). For such a curve
    the inequalities (\ref{i1}), (\ref{i2}) take the form
    
  $k^-\leq \dfrac{(ae+2b)(am^2-2m^2+a)}{4m^2}-a+2,$
  
  $ k^-+k^0\leq \dfrac{(ae+2b)(ah^2-2h^2+a)}{4h^2}-a+2+\rho+\delta$ 
  
  with an odd $ m\,|\,\mathrm{gcd}(a,b) $ and $ h=p^{\alpha}\,|\,m $ for a prime $ p $ and the maximal $ \alpha $. 
      Both inequalities are not sharp for $ e>0 $.
 \subsection{Curves on a Del Pezzo surface}
 	A del Pezzo surface $ X $ is a complete non-singular surface with ample anti-canonical class $ -K_X=c_1(X)$, and
 	 $d=K_X^2$   the degree of  $ X $. It is well known that $ 1\leq d\leq 9$, $ X=P^2$ for $d=9$ and $ X=P^1\times P^1$ for $ d=8 $.
 	 If $ A\in nc_1(X)\subset X $ is a nonsingular real curve then
 	  the inequalities (\ref{i1}), (\ref{i2}) take the form
 	
 	$k^-\leq \dfrac{(n-1)^2d}{4}+\dfrac{(n^2-m^2)d}{4m^2}+2,$
 	
 	$ k^-+k^0\leq \dfrac{(n-1)^2d}{4}+\dfrac{(n^2-h^2)d}{4h^2}+2+\rho+\delta$ 
 	
 	with an odd $ m\,|\,n $ and $ h=p^{\alpha}\,|\,m $ for a prime $ p $ and the maximal $ \alpha $.   If $X^c$ is a torus then $ \rho \leq1 $ else $ \rho=0 $.
 	
 	Let $ X  $ be a del Pezzo surface
 	of degree $d=2$ that is a two-sheeted covering of the projective plane branched over a curve $ C_4 $ of degree $ 4 $. Suppose that $ A $ is a lifting to $ X $ of the plane curve $ C_3 $ of degree $ 3 $. 
 	
 Suppose that $ C_4 $ consists of four (empty) ovals, $ O_4 $ is one of them,  the odd branch (i.e. one-sided component) $ J_3 $ of $ C_3 $ and $ O_4 $ have the intersection with the code $ 123a9458bc76 $ that means the following. The points of $ O_4\cap J_3 $ are numbered by $ 1,\ldots,9,a,b,c $ in their order along $ O_4 $. The code of the intersection is the order of the points along $ J_3 $ (see Figure \ref{DPII} where $ J_3 $ is a line and the oval of $ C_3 $ is inside $ O_4 $). From the construction of this arrangement (see \cite[Figure 5.12]{K} or \cite[Figure 53]{O}) it is clear that the pair of points $ 1,2 $ or $ b,c $ can be deleted so that the real curve $ A^c\subset B\subset X^c $, with $ B $ being a sphere, consists of a nest of four ovals with three empty ovals inside the inner oval of the nest. The number of ovals of $ A $ is $ 7 $, one less than maximal, thus it is of type II and $ \delta=0 $. By Remark \ref{rho} we have $\rho=0 $. Therefore the second inequality (with $n=h=3$): $ k^-+k^0\leq 4$  is sharp.
 	\begin{figure}[h]
 		\begin{center}
 			\scalebox{0.9}{\includegraphics{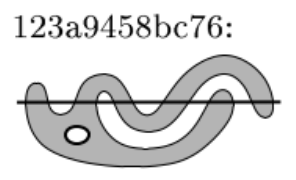}}\\
 			\end{center}
 		\caption{}
 		\label{DPII}
 	\end{figure} 
 
 	Another example of the curve $ A $ is obtained from $ C_4 $ consisting of a nest of two ovals (see Figure \ref{DP}). For such a curve the second inequality (with $n=h=3$, $\rho=1 $ and $ \delta=1 $): $ k^-+k^0\leq 6$  is sharp.  
 	
 	\begin{figure}[h]
 		\begin{center}
 			\scalebox{0.9}{\includegraphics{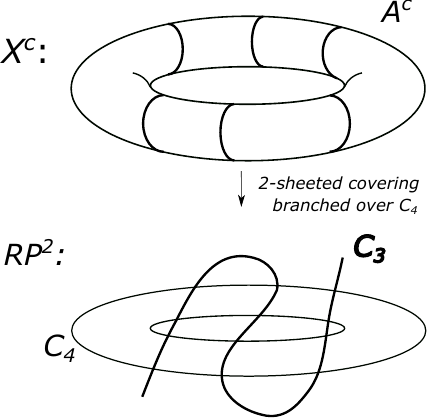}}\\
 			\end{center}
 		    \caption{}
 		        \label{DP}
 	\end{figure} 
	The author is grateful to S.Yu.~Orevkov for paying author's attention at  M. Manzaroli's paper \cite{M}, for stating the problem and for useful discussions.


\end{document}